\date{}
\title{On self-similar solutions of a multi-phase Stefan problem in the half-line}
\author{E.Yu. Panov \\ Yaroslav-the-Wise Novgorod State University, \\ Veliky Novgorod, Russia}% Author's address
\theoremstyle{plain}
\newtheorem{theorem}{Theorem}[section]
\newtheorem{lemma}{Lemma}[section]
\newtheorem{proposition}{Proposition}[section]
\theoremstyle{definition}
\newtheorem{remark}{Remark}[section]
\newtheorem{example}{Example}[section]
\numberwithin{equation}{section}
\newcommand{\R}{{\mathbb R}}
\newcommand{\N}{{\mathbb N}}
\newcommand{\D}{{\mathcal D}}
\newcommand{\const}{\mathrm{const}}
\begin{document}
\maketitle

\begin{abstract}
We study self-similar solutions of a multi-phase Stefan problem for a heat equation on the half-line $x>0$ with a constant initial data and with Dirichlet or Neumann boundary conditions. In the case of Dirichlet boundary condition we prove that a nonlinear algebraic system for determination of the free boundaries is gradient one and the corresponding potential is an explicitly written strictly convex and coercive function. Therefore, there exists a unique minimum point of the potential, coordinates of this point determine free boundaries and provide the desired solution. This result is also extended to the case of infinitely many phase transitions. In the case of Neumann boundary condition we demonstrate that the problem may have solutions with different numbers (called types) of phase transitions. For each fixed type $n$ the system for determination of the free boundaries is again gradient and the corresponding potential is proved to be strictly convex and coercive, but in some wider non-physical domain. On the base of these properties it is proved that there exists a unique solution of Stefan-Neumann problem, and we also provide precise conditions to specify the type of the solution.
Bibliography$:$
$5$ titles.
\end{abstract}

%\classification[]{35K58, 35K20, 35C06, 35R35, 80A22}

%\keywords{heat equation, Stefan problem, free boundaries, Dirichlet and Neumann boundary conditions, self-similar solutions, variational formulation}

\section{Stefan problem with Dirichlet boundary condition}\label{sec1}

In a quarter-plane $\Pi_+=\{ \ (t,x)\in\R^2 \ | \ t,x>0 \ \}$ we consider the multi-phase Stefan problem for the heat equation
\begin{equation}\label{1}
u_t=a_i^2u_{xx}, \quad u_i<u<u_{i+1}, \ i=0,\ldots,m,
\end{equation}
where $u_0\le u_1<\cdots<u_m<u_{m+1}=u_D$, $u_i$, $i=1,\ldots,m$, being the temperatures of phase transitions, $a_i>0$,
$i=0,\ldots,m$, are the diffusivity constants. We will study continuous piecewise smooth solutions $u=u(t,x)$ in $\Pi_+$ satisfying (\ref{1}) in the classical sense in the domains $u_i<u(t,x)<u_{i+1}$, $i=0,\ldots,m$, filled with the phases. On the unknown lines $x=x_i(t)$ of phase transitions where $u=u_i$ the following Stefan condition
\begin{equation}\label{St}
d_ix_i'(t)+k_iu_x(t,x_i(t)+)-k_{i-1}u_x(t,x_i(t)-)=0
\end{equation}
is postulated, where $k_i>0$ is the thermal conductivity of the $i$-th phase, while $d_i\ge 0$ is the Stefan number (the latent specific heat) for the $i$-th phase transition. In (\ref{St}) the unilateral limits $u_x(t,x_i(t)+)$, $u_x(t,x_i(t)-)$ on the line $x=x_i(t)$ are taken from the domain corresponding to the warmer/colder phase, respectively. For physical reasons,
the Stefan numbers $d_i$ should be positive. We will study an even more general case $d_i\ge 0$, assuming that $d_1>0$ if $u_0=u_1$.

In this case the problem (\ref{1}), (\ref{St}) is well-posed for $u_0\le u\le u_D$ and reduces to a degenerate nonlinear diffusion  equation (see \cite{Kam}, \cite[Chapter 5]{LSU})
\begin{equation}\label{diff}
\beta(u)_t-\alpha(u)_{xx}=0,
\end{equation}
where $\alpha(u)$, $\beta(u)$ are strictly increasing functions on $[u_0,u_D]$ linear on each interval $(u_i,u_{i+1})$,
$i=0,\ldots,m$, with slopes $\alpha'(u)=k_i$, $\beta'(u)=k_i/a_i^2$, and such that
\[
\alpha(u_i+)-\alpha(u_i-)=0, \quad \beta(u_i+)-\beta(u_i-)=d_i, \ i=1,\ldots,m
\]
(here we agree that $\alpha(u_1-)=\alpha(u_0)$, $\beta(u_1-)=\beta(u_0)$ in the case $u_1=u_0$).

We will study the initial-boundary value problem with constant initial and Dirichlet boundary data
\begin{equation}\label{2}
u(0,x)=u_0 \ \forall x>0, \quad u(t,0)=u_D \ \forall t>0.
\end{equation}
By the invariance of our problem under the transformation group
$(t,x)\to (\lambda^2 t, \lambda x)$, $\lambda\in\R$, $\lambda>0$, it is natural to seek a self-similar solution of problem (\ref{1}), (\ref{St}), (\ref{2}), which has the form $u(t,x)=u(\xi)$, $\xi=x/\sqrt{t}$. In view of (\ref{2}),
\[u(0)=u_D, \quad u(+\infty)\doteq\lim_{\xi\to+\infty} u(\xi)=u_0<u_D.\]
Thus, it is natural to suppose that the function $u(\xi)$ decreases. The case when $u_D\le u_0$ can be treated similarly.
Certainly, in this case the function $u(\xi)$ should be increasing.

For the heat equation
$u_t=a^2 u_{xx}$ a self-similar solution must satisfy the linear ODE $a^2u''=-\xi u'/2$, the general solution of which is
\[
u=C_1F(\xi/a)+C_2, \ C_1,C_2=\const, \mbox{ where } F(\xi)=\frac{1}{\sqrt{\pi}}\int_0^\xi e^{-s^2/4}ds.
\]
This allows to write our solution in the form
\begin{align}\label{3}
u(\xi)=u_i+\frac{u_{i+1}-u_i}{F(\xi_{i+1}/a_i)-F(\xi_i/a_i)}(F(\xi/a_i)-F(\xi_i/a_i)), \\ \nonumber
\xi_{i+1}<\xi<\xi_i, \ i=0,\ldots,m,
\end{align}
where $+\infty=\xi_0>\xi_1>\cdots>\xi_m>\xi_{m+1}=0$ and we agree that $\displaystyle F(+\infty)=\frac{1}{\sqrt{\pi}}\int_0^{+\infty} e^{-s^2/4}ds=1$.
Notice that the function $u(\xi)$ is constant on the interval $(\xi_1,+\infty)$ if $u_0=u_1$.
The parabolas $\xi=\xi_i$, $i=1,\ldots,m$, where $u=u_i$, are free boundaries, which must be determined by conditions (\ref{St}).
In the variable $\xi$ these conditions have the form (cf. \cite[Chapter XI]{CJ})
\begin{equation}\label{4}
d_i\xi_i/2+\frac{k_i(u_{i+1}-u_i)F'(\xi_i/a_i)}{a_i(F(\xi_{i+1}/a_i)-F(\xi_i/a_i))}-
\frac{k_{i-1}(u_i-u_{i-1})F'(\xi_i/a_{i-1})}{a_{i-1}(F(\xi_i/a_{i-1})-F(\xi_{i-1}/a_{i-1}))}=0,
\end{equation}
$i=1,\ldots,m$. Formally, we can consider solution (\ref{3}) also in the case $u_m=u_{m+1}$ when $u(\xi)\equiv u_D$ for
$0<\xi<\xi_m$ but in this case condition (\ref{4}) with $i=m$ reduces to the relation
\[
d_m\xi_m/2-\frac{k_{m-1}(u_m-u_{m-1})F'(\xi_m/a_{m-1})}{a_{m-1}(F(\xi_m/a_{m-1})-F(\xi_{m-1}/a_{m-1}))}=0,\]
which is impossible since the left-hand side of this relation is strictly positive. This is the reason why we exclude the case $u_m=u_{m+1}$ in our setting. On the contrary, the case $u_1=u_0$ is correct and does not cause any problems if $d_1>0$ because in this case (\ref{4}) with $i=1$ provides the relation
\[
d_1\xi_1/2+\frac{k_1(u_2-u_1)F'(\xi_1/a_1)}{a_1(F(\xi_2/a_1)-F(\xi_1/a_1))}=0,\]
containing the terms of opposite signs.

In the case $m=1$ system (\ref{4}) reduces to a single equation, which can be easily analysed. As a result, we obtain the classical Neumann solution of the Stefan problem.
To investigate the nonlinear system (\ref{4}) in general case of arbitrary number $m$ of phase transitions, we notice that it is a gradient one and coincides with the equality
$\nabla E(\bar\xi)=0$, where the function
\begin{align}\label{5}
E(\bar\xi)=-\sum_{i=0}^m  k_i(u_{i+1}-u_i)\ln (F(\xi_i/a_i)-F(\xi_{i+1}/a_i))+\sum_{i=1}^m d_i\xi_i^2/4, \\ \nonumber \bar\xi=(\xi_1,\ldots,\xi_m)\in\Omega,
\end{align}
the open convex domain $\Omega\subset\R^m$ is given by the inequalities $\xi_1>\cdots>\xi_m>0$.
Observe that $E(\bar\xi)\in C^\infty(\Omega)$. Since the function $F(\xi)$ takes values in the interval $(0,1)$, all the terms in expression (\ref{5}) are nonnegative while some of them are strictly positive. Therefore, $E(\bar\xi)>0$.

\subsection{Coercivity of the function $E$ and existence of a solution}

Let us introduce the sub-level sets \[\Omega(c)=\{ \ \bar\xi\in\Omega \ | \ E(\bar\xi)\le c \ \}, \quad c>0.\]

\begin{proposition}[coercivity]\label{th1}
The sets $\Omega(c)$ are compact for each $c>0$.
In particular, the function $E(\bar\xi)$ reaches its minimal value.
\end{proposition}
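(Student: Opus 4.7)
The plan is to show that $E$ is proper on $\Omega$, i.e., $E(\bar\xi)\to+\infty$ whenever $\bar\xi$ tends to the extended boundary of $\Omega$ in $[0,+\infty]^m$. This extended boundary decomposes into three regimes: (i) two consecutive coordinates collide, $\xi_i=\xi_{i+1}$ for some $1\le i\le m-1$; (ii) $\xi_m\to 0$; (iii) $\xi_1\to+\infty$. Properness together with the continuity of $E$ on $\Omega$ will immediately yield that each sub-level set $\Omega_c$ is bounded in $\R^m$ and has all its limit points inside $\Omega$, hence is compact.

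The structural remark that makes everything work is that \emph{every} summand appearing in $E$ is nonnegative. Since $F$ takes values in $(0,1)$, one has $0<F(\xi_i/a_i)-F(\xi_{i+1}/a_i)<1$, so each logarithmic term is positive; the quadratic terms $d_i\xi_i^2/4$ are manifestly nonnegative. Consequently, to establish properness it suffices to exhibit, in each of the three boundary regimes, a single summand diverging to $+\infty$.

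For regime (i) the $i$-th log summand has coefficient $k_i(u_{i+1}-u_i)>0$ and its argument tends to zero, so it diverges. For regime (ii) the term with $i=m$ equals $-k_m(u_D-u_m)\ln F(\xi_m/a_m)$ and blows up, since $u_D>u_m$ and $F(\xi_m/a_m)\to 0$. For regime (iii), if $u_1>u_0$ the $i=0$ summand $-k_0(u_1-u_0)\ln(1-F(\xi_1/a_0))$ diverges (as $F(\xi_1/a_0)\to 1$); in the degenerate case $u_1=u_0$ that summand vanishes identically, but the paper's standing hypothesis forces the first Stefan number to be strictly positive, so the quadratic contribution $d_1\xi_1^2/4$ blows up instead. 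Thus in every case $E(\bar\xi)\to+\infty$, and no sequence in $\Omega_c$ can approach $\partial\Omega$, proving compactness.

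Once compactness of the sub-level sets is established, attainment of the minimum is automatic: pick any $\bar\xi_0\in\Omega$ and apply the extreme value theorem to the continuous function $E$ on the nonempty compact set $\Omega_{E(\bar\xi_0)}$; since $E>E(\bar\xi_0)$ outside this set, the minimizer produced is automatically a global minimum on $\Omega$. The only delicate point I anticipate is the degenerate case $u_1=u_0$ in regime (iii), where the logarithmic coercivity in $\xi_1$ collapses entirely and the proof must fall back on the quadratic term — which is precisely the reason for the introduction's assumption that the relevant Stefan number is strictly positive when $u_0=u_1$.
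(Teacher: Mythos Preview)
Your proof is correct and follows essentially the same approach as the paper's: both rely on the nonnegativity of every summand in $E$ and identify exactly the same terms as controlling each boundary direction (the $i=m$ log term for $\xi_m\to 0$, the intermediate log terms for $\xi_i-\xi_{i+1}\to 0$, and the $i=0$ log term or the $d_1\xi_1^2/4$ term for $\xi_1\to+\infty$, with the same case split on $u_0<u_1$ versus $u_0=u_1$). The only difference is presentational: the paper inverts the inequalities to produce explicit constants $r_1,r_2,\delta$ and an explicit compact $K\supset\Omega_c$, whereas you argue contrapositively via blow-up at the extended boundary; the content is the same.
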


\begin{proof}
If $\bar\xi=(\xi_1,\ldots,\xi_m)\in\Omega(c)$ then
\begin{align}\label{co1a}
 -k_i(u_{i+1}-u_i)\ln (F(\xi_i/a_i)-F(\xi_{i+1}/a_i))\le E(\bar\xi)\le c, \quad i=0,\ldots,m; \\
\label{co1b}
d_i\xi_i^2/4\le E(\bar\xi)\le c, \quad i=1,\ldots,m.
\end{align}
It follows from (\ref{co1a}) with $i=m$ that $F(\xi_m/a_m)\ge e^{-c/(k_m(u_{m+1}-u_m))}$, which implies
the low bound
\[\xi_m\ge r_1=a_mF^{-1}(e^{-c/(k_m(u_{m+1}-u_m))}).\]
Similarly, we derive from (\ref{co1a}) with $i=0$ that in the case $u_1>u_0$
\[1-F(\xi_1/a_0)\ge e^{-c/(k_0(u_1-u_0))}\] (notice that $F(\xi_0/a_0)=F(+\infty)=1$). Therefore,
$F(\xi_1/a_0)\le 1-e^{-c/(k_0(u_1-u_0))}$. This implies the upper bound $\xi_1\le a_0F^{-1}(1-e^{-c/(k_0(u_1-u_0))})$.
On the other hand, if $u_0=u_1$ then $d_1>0$ and it follows from (\ref{co1b}) with $i=1$ that $\xi_1\le (4c/d_1)^{1/2}$. We summarize that in any case
\[
\xi_1\le r_2=\left\{\begin{array}{lcr}a_0F^{-1}(1-e^{-c/(k_0(u_1-u_0))}) & , & u_0<u_1, \\
(4c/d_1)^{1/2} & , & u_0=u_1. \end{array}\right.
\]
Further, it follows from (\ref{co1a}) that for all $i=1,\ldots,m-1$
\begin{equation}\label{6}
F(\xi_i/a_i)-F(\xi_{i+1}/a_i)\ge \delta_1\doteq\exp(-c/\alpha)>0,
\end{equation}
where $\displaystyle\alpha=\min_{i=1,\ldots,m-1}k_i(u_{i+1}-u_i)>0$. Since $F'(\xi)=\frac{1}{\sqrt{\pi}}e^{-\xi^2/4}<1$, the function $F(\xi)$ is Lipschitz with constant $1$, and it follows from (\ref{6}) that
\[
(\xi_i-\xi_{i+1})/a_i\ge F(\xi_i/a_i)-F(\xi_{i+1}/a_i)\ge \delta_1, \quad i=1,\ldots,m-1,
\]
and we obtain the estimates $\xi_i-\xi_{i+1}\ge\delta=\delta_1\min a_i$. Thus, the set $\Omega(c)$ is contained in a compact
\[
K=\{ \ \bar\xi=(\xi_1,\ldots,\xi_m)\in\R^m \ | \ r_2\ge\xi_1\ge\cdots\ge\xi_m\ge r_1, \ \xi_i-\xi_{i+1}\ge\delta \ \forall i=1 ,\ldots,m-1 \ \}.
\]
Since $E(\bar\xi)$ is continuous on $K$, the set $\Omega(c)$ is a closed subset of $K$ and therefore is compact. For $c>N\doteq\inf E(\bar\xi)$, this set is not empty and the function $E(\bar\xi)$ reaches on it a minimal value, which is evidently equal to $N$.
\end{proof}

We have established the existence of minimal value $E(\bar\xi_0)=\min E(\bar\xi)$. At the point $\bar\xi_0$ the required condition $\nabla E(\bar\xi_0)=0$ is satisfied, and $\bar\xi_0$ is a solution of the system (\ref{4}). The coordinates
of $\bar\xi_0$ determine the solution (\ref{3}) of our Stefan problem. Thus, we have established the following existence result.

\begin{theorem}\label{th1a}
There exists a self-similar solution (\ref{3}) of the problem (\ref{1}), (\ref{St}), (\ref{2}).
\end{theorem}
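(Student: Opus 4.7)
The plan is to combine the coercivity established in Proposition \ref{th1} with the gradient structure of the system (\ref{4}) encoded by (\ref{5}). Since every summand in (\ref{5}) is nonnegative, the infimum $N=\inf_{\bar\xi\in\Omega}E(\bar\xi)\ge 0$ is finite. Fixing any $c>N$, the sublevel set $\Omega_c$ is nonempty, and by Proposition \ref{th1} it is compact. The smooth function $E$ therefore attains its minimum on $\Omega_c$ at some $\bar\xi_0$, which is automatically a global minimizer of $E$ on $\Omega$.

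Next I would observe that $\bar\xi_0$ is an interior point of $\Omega$: the proof of Proposition \ref{th1} in fact places $\Omega_c$ inside the compact set $K$, all of whose points satisfy the strict inequalities $\xi_1>\cdots>\xi_m>0$ (with uniform gaps $r_1>0$ and $\delta>0$). Since $\Omega$ is open and $\bar\xi_0\in\mathrm{int}\,\Omega$, the minimizer is a critical point: $\nabla E(\bar\xi_0)=0$. A direct differentiation of (\ref{5}), using $F'(\xi)=\pi^{-1/2}e^{-\xi^2/4}$ and the fact that the variable $\xi_i$ appears only in the two logarithmic terms with indices $i-1$ and $i$ and in the quadratic term $d_i\xi_i^2/4$, shows that $\nabla E(\bar\xi_0)=0$ is precisely the system (\ref{4}).

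Writing $\bar\xi_0=(\xi_1^0,\ldots,\xi_m^0)$, I would then define $u(\xi)$ by the piecewise formula (\ref{3}) with $\xi_i=\xi_i^0$ and set $u(t,x)=u(x/\sqrt{t})$. By construction $u$ is continuous on $(0,+\infty)$, piecewise smooth, and on each interval $(\xi_{i+1}^0,\xi_i^0)$ satisfies $a_i^2u''+\xi u'/2=0$, so $u$ solves (\ref{1}) classically in every phase region of $\Pi_+$. The Stefan conditions (\ref{St}) across the free boundaries $x=\xi_i^0\sqrt t$ reduce via the chain rule in $\xi=x/\sqrt t$ to (\ref{4}), which holds at $\bar\xi_0$; the initial--boundary data (\ref{2}) are built into (\ref{3}) through $u(+\infty)=u_0$ and $u(0)=u_D$.

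No real obstacle remains since the heavy lifting has been done in Proposition \ref{th1}. The only step requiring care is the computation identifying $\partial E/\partial\xi_i$ with the left-hand side of the $i$-th equation of (\ref{4}) up to a nonzero constant factor, but this is a routine calculation and raises no genuine difficulty.
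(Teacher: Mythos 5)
Your proposal is correct and follows essentially the same route as the paper: the minimizer of $E$ guaranteed by the coercivity in Proposition~\ref{th1} lies in the open set $\Omega$, is therefore a critical point, and the equation $\nabla E(\bar\xi_0)=0$ is exactly the system (\ref{4}) determining the free boundaries in (\ref{3}). The extra care you take in checking that the minimizer is interior and that the gradient of (\ref{5}) reproduces (\ref{4}) is consistent with, and slightly more explicit than, the paper's argument.
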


\subsection{Convexity of the function $E$ and uniqueness of a solution}

In this section we prove that the function $E(\bar\xi)$ is strictly convex. Since a strictly convex function can have at most one critical point (and it is necessarily a global minimum), the system (\ref{4}) has at most one solution, that is, a self-similar solution (\ref{3}) of the problem (\ref{1}), (\ref{St}), (\ref{2}) is unique. We will need the following simple lemma proven in \cite{Pan1} (see also \cite{Pan2}). For the sake of completeness we provide it with the proof.

\begin{lemma}\label{lem1}
The function $P(x,y)=-\ln (F(x)-F(y))$ is strictly convex in the half-plane $x>y$.
\end{lemma}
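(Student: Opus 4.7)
The strategy is to show the Hessian of $P$ is positive definite on $\{x>y\}$. Setting $f(\xi) = F'(\xi) = \pi^{-1/2} e^{-\xi^2/4}$ and $\psi = F(x)-F(y) > 0$, so that $P = -\ln\psi$, a routine differentiation combined with the identity $f'(\xi) = -(\xi/2)f(\xi)$ yields
\begin{align*}
\psi^2 P_{xx} &= f(x)^2 + (x/2) f(x)\psi, \\
\psi^2 P_{yy} &= f(y)^2 - (y/2) f(y)\psi, \\
\psi^2 P_{xy} &= -f(x)f(y).
\end{align*}
Strict convexity reduces to positivity of both diagonal entries and a strict determinantal inequality.

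For the diagonal entries I would use an auxiliary function argument. Fix $y$ and set $\phi(x) = f(x) + (x/2)\psi$; the cancellation $f'(x) + (x/2)f(x) = 0$ leaves $\phi'(x) = \psi/2 > 0$, so $\phi$ is strictly increasing on $(y, +\infty)$ with right-limit $f(y) > 0$ at $x = y^+$, whence $\psi^2 P_{xx} = f(x)\phi(x) > 0$. Symmetrically, $\chi(y) = f(y) - (y/2)\psi$ satisfies $\chi'(y) = -\psi/2 < 0$ and approaches $f(x) > 0$ as $y \to x^-$, giving $\psi^2 P_{yy} = f(y)\chi(y) > 0$.

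The determinantal inequality is the main obstacle. A direct expansion collects the cross terms as
\[
\psi^4(P_{xx}P_{yy} - P_{xy}^2) = \frac{f(x)f(y)\psi}{4}\, W(x,y), \quad W(x,y) := 2xf(y) - 2yf(x) - xy(F(x)-F(y)),
\]
so everything reduces to proving $W(x,y) > 0$ for $x > y$. Observe that $W(x,x) = 0$ and a short calculation yields $\partial_y W = -2f(x) - x(F(x)-F(y))$. When $x \ge 0$ and $y < x$ both summands are nonpositive and the first is strictly negative, so $W$ strictly decreases in $y$ and $W(x,y) > W(x,x) = 0$. The remaining case $x < 0$ (which forces $y < x < 0$) I would dispatch via the symmetry $W(x,y) = W(-y,-x)$, immediate from the parities $f(-s) = f(s)$ and $F(-s) = -F(s)$; this reduces to the previous case with arguments $-y > -x \ge 0$.
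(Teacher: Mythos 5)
Your proof is correct, but it handles the key step --- positive definiteness of the Hessian --- by a genuinely different route than the paper. Both arguments start from the same scaled Hessian entries (your $\psi^2 P_{xx}$, $\psi^2 P_{yy}$, $\psi^2 P_{xy}$ are exactly the paper's $Q_{11}$, $Q_{22}$, $Q_{12}$). The paper then never touches the determinant: applying the Cauchy mean value theorem to $\bigl(F'(x)-F'(y)\bigr)/\bigl(F(x)-F(y)\bigr)=-z/2$ with $z\in(y,x)$, it rewrites the scaled Hessian as $Q=R_1+F'(x)F'(y)R_2$, where $R_1$ is diagonal with entries $F'(x)(F(x)-F(y))(x-z)/2$ and $F'(y)(F(x)-F(y))(z-y)/2$ (both positive) and $R_2=\left(\begin{smallmatrix}1&-1\\-1&1\end{smallmatrix}\right)\ge 0$, so $Q>0$ in one stroke. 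You instead go through Sylvester's criterion: your diagonal-entry positivity via the auxiliary monotone functions $\phi$ and $\chi$ is a clean substitute for the paper's mean-value step, and the real extra work is the determinant, which you reduce to $W(x,y)=2xf(y)-2yf(x)-xy(F(x)-F(y))>0$ and settle by the sign of $\partial_y W$ for $x\ge 0$ together with the parity symmetry $W(x,y)=W(-y,-x)$ for $x<0$. I checked the identities: the expansion of $\psi^4(P_{xx}P_{yy}-P_{xy}^2)$, the formula $\partial_y W=-2f(x)-x(F(x)-F(y))$, and the symmetry all hold, and the case split is exhaustive. The paper's decomposition is shorter and scales as a statement about the matrix itself; your argument is more computational but entirely elementary (no mean value theorem) and makes the strictness of the determinantal inequality completely explicit. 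Both are valid proofs of the lemma.
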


\begin{proof}
The function $P(x,y)$ is infinitely differentiable in the domain $x>y$. To prove the lemma, we need to establish that the Hessian $D^2 P$ is positive definite at every point. By the direct computation we find
\begin{align*}
\frac{\partial^2}{\partial x^2} P(x,y)=\frac{(F'(x))^2-F''(x)(F(x)-F(y))}{(F(x)-F(y))^2}, \\
\frac{\partial^2}{\partial y^2} P(x,y)=\frac{(F'(y))^2-F''(y)(F(y)-F(x))}{(F(x)-F(y))^2}, \
\frac{\partial^2}{\partial x\partial y} P(x,y)=-\frac{F'(x)F'(y)}{(F(x)-F(y))^2}.
\end{align*}
We have to prove positive definiteness of the matrix $Q=(F(x)-F(y))^2 D^2 P(x,y)$ with the components
\begin{align*}
Q_{11}=(F'(x))^2-F''(x)(F(x)-F(y)), \\ Q_{22}=(F'(y))^2-F''(y)(F(y)-F(x)), \ Q_{12}=Q_{21}=-F'(x)F'(y).
\end{align*}
Since $F'(x)=\frac{1}{\sqrt{\pi}}e^{-x^2/4}$, then $F''(x)=-\frac{x}{2}F'(x)$ and the diagonal elements of this matrix can be written in the form
\begin{align*}
Q_{11}=F'(x)(\frac{x}{2}(F(x)-F(y))+F'(x))= \\ F'(x)(\frac{x}{2}(F(x)-F(y))+(F'(x)-F'(y)))+F'(x)F'(y), \\
Q_{22}=F'(y)(\frac{y}{2}(F(y)-F(x))+(F'(y)-F'(x)))+F'(x)F'(y).
\end{align*}
By Cauchy mean value theorem there exists such a value $z\in (y,x)$ that
\[
\frac{F'(x)-F'(y)}{F(x)-F(y)}=\frac{F''(z)}{F'(z)}=-z/2.
\]
Therefore,
\begin{align*}
Q_{11}=F'(x)(F(x)-F(y))(x-z)/2+F'(x)F'(y), \\ Q_{22}=F'(y)(F(x)-F(y))(z-y)/2+F'(x)F'(y),
\end{align*}
and it follows that $Q=R_1+F'(x)F'(y)R_2$, where $R_1$ is a diagonal matrix with the positive diagonal elements
$F'(x)(F(x)-F(y))(x-z)/2$, $F'(y)(F(x)-F(y))(z-y)/2$ while $R_2=\left(\begin{smallmatrix} 1 & -1 \\ -1 & 1\end{smallmatrix}\right)$. Since $R_1>0$, $R_2\ge 0$, then the matrix $Q>0$, as was to be proved.
\end{proof}

\begin{remark}\label{rem1}
In addition to Lemma~\ref{lem1} we observe that the functions $P(x,0)=-\ln F(x)$, $P(+\infty,x)=-\ln(1-F(x))$
of single variable $x$ are strictly convex on $(0,+\infty)$. In fact, it follows from Lemma~\ref{lem1} in the limit as $y\to 0$ that the function $P(x,0)$ is convex on $(0,+\infty)$, moreover,
\[
(F(x))^2\frac{d^2}{dx^2}P(x,0)=F'(x)(\frac{x}{2}F(x)+F'(x))=\lim_{y\to 0}Q_{11}\ge 0.
\]
Since $F'(x)>0$, we find, in particular, that $\frac{x}{2}F(x)+F'(x)\ge 0$.
If $\frac{d^2}{dx^2}P(x,0)=0$ at some point $x=x_0$ then $0=\frac{x_0}{2}F(x_0)+F'(x_0)$ is the minimum of the nonnegative function $\frac{x}{2}F(x)+F'(x)$. Therefore, its derivative $(\frac{x}{2}F+F')'(x_0)=0$. Since $F''(x)=-\frac{x}{2}F'(x)$, this derivative
\[
(\frac{x}{2}F+F')'(x_0)=F(x_0)/2+\frac{x_0}{2}F'(x_0)+F''(x_0)=F(x_0)/2>0.
\]
But this contradicts our assumption. We conclude that $\frac{d^2}{dx^2}P(x,0)>0$ and the function $P(x,0)$ is strictly convex.

The strong convexity of the function $P(+\infty,x)=-\ln(1-F(x))$ is proved similarly. For the sake of completeness, we
provide the details.
In the limit as $x<y\to+\infty$ we derive from Lemma~\ref{lem1} that the function $P(+\infty,x)=\lim\limits_{y\to+\infty}P(y,x)$ is convex on $\R$ and
\[(1-F(x))^2\frac{d^2}{dx^2}P(+\infty,x)=F'(x)(\frac{x}{2}(F(x)-1)+F'(x))\ge 0.\]
If $\displaystyle\frac{d^2}{dx^2}P(+\infty,x)=0$ at some point $x=x_0\in\R$ then $x_0$ is a minimum point of the nonnegative function
$\frac{x}{2}(F(x)-1)+F'(x)$. Therefore,
\[
0=(\frac{x}{2}(F(x)-1)+F'(x))'(x_0)=(F(x_0)-1)/2+F''(x_0)+F'(x_0)x_0/2=(F(x_0)-1)/2<0.
\]
This contradiction implies that $\displaystyle\frac{d^2}{dx^2}P(+\infty,x)>0$ for all $x\in\R$ and, therefore, the function $P(+\infty,x)$ is strictly convex (even on the whole line $\R$).
\end{remark}

Now we are ready to prove the expected convexity of $E(\bar\xi)$.

\begin{proposition}\label{th2}
The function $E(\bar\xi)$ is strictly convex on $\Omega$.
\end{proposition}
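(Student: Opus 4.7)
The plan is to decompose $E$ into pieces each depending on at most two consecutive coordinates, show each piece is convex (often strictly so) via Lemma~\ref{lem1} and Remark~\ref{rem1}, and then combine these partial convexities by a largest-index argument to obtain strict convexity of the full $m\times m$ Hessian. Concretely I would write
\[
E(\bar\xi)=E_0(\xi_1)+\sum_{i=1}^{m-1}E_i(\xi_i,\xi_{i+1})+E_m(\xi_m)+\sum_{i=1}^m\frac{d_i}{4}\xi_i^2,
\]
with $E_0(\xi_1)=-k_0(u_1-u_0)\ln(1-F(\xi_1/a_0))$ (understood as $0$ when $u_0=u_1$), $E_i(\xi_i,\xi_{i+1})=-k_i(u_{i+1}-u_i)\ln(F(\xi_i/a_i)-F(\xi_{i+1}/a_i))$ for $1\le i\le m-1$, and $E_m(\xi_m)=-k_m(u_{m+1}-u_m)\ln F(\xi_m/a_m)$. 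Since $u_{i+1}>u_i$ for $i\ge 1$ and since the case $u_m=u_{m+1}$ is excluded, the coefficients in $E_1,\ldots,E_m$ are strictly positive. Under the affine change $(x,y)=(\xi_i/a_i,\xi_{i+1}/a_i)$, Lemma~\ref{lem1} makes each $E_i$ with $1\le i\le m-1$ strictly convex in $(\xi_i,\xi_{i+1})$ and contributes a positive definite $2\times 2$ block to $D^2E$; Remark~\ref{rem1} makes $E_m$ strictly convex in $\xi_m$ and makes $E_0$ convex in $\xi_1$ (strictly when $u_0<u_1$); the quadratic sum is trivially convex.

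The main obstacle is that none of these pieces involves every coordinate simultaneously, so I must verify that the sum of partially-supported positive semidefinite Hessians admits no nontrivial joint null vector. For this, given nonzero $v\in\R^m$, let $k=\max\{i:v_i\ne 0\}$. If $k=m$, the strictly convex $E_m$ contributes a term proportional to $v_m^2>0$ to $v^\top D^2E(\bar\xi)v$, while every other summand is nonnegative. If $1\le k\le m-1$, then $v_{k+1}=0$ and $v_k\ne 0$, so $(v_k,v_{k+1})\ne(0,0)$ and the positive definite $2\times 2$ block coming from $E_k$ produces a strictly positive contribution; all remaining terms are again nonnegative. In either case $v^\top D^2E(\bar\xi)v>0$, which is the desired strict convexity on $\Omega$. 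Combined with Proposition~\ref{th1}, this forces the minimizer produced by Theorem~\ref{th1a} to be the unique critical point of $E$, and hence the self-similar solution~(\ref{3}) is unique.
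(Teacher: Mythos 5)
Your proof is correct and follows essentially the same route as the paper: the same decomposition of $E$ into the pieces $E_0,\ldots,E_m$ plus the quadratic terms, with Lemma~\ref{lem1} giving strict convexity of each two-variable block and Remark~\ref{rem1} handling the boundary pieces $E_0$ and $E_m$. The only difference is cosmetic: the paper rules out a null vector of $D^2\tilde E$ by treating the cases $m\ge 2$ and $m=1$ separately, whereas your largest-nonzero-index argument handles both uniformly.
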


\begin{proof}
We introduce the functions
\[E_i(\bar\xi)=-k_i(u_{i+1}-u_i)\ln (F(\xi_i/a_i)-F(\xi_{i+1}/a_i)), \quad i=0,\ldots,m.\]
By Lemma~\ref{lem1} and Remark~\ref{rem1} all these functions are convex.
Since
\[
E(\bar\xi)=\sum_{i=1}^m E_i(\bar\xi)+ E_0(\bar\xi)+\sum_{i=1}^m d_i\xi_i^2/4
\]
and all functions in this sum are convex, it is sufficient to prove strong convexity of the sum
\[
\tilde E(\bar\xi)=\sum_{i=1}^m E_i(\bar\xi).
\]
By Lemma~\ref{lem1} and Remark~\ref{rem1} all terms in this sum are convex functions. Therefore, the function $\tilde E$ is convex as well. To prove the strict convexity, we assume that for some vector $\zeta=(\zeta_1,\ldots,\zeta_m)\in\R^m$.
\begin{equation}\label{deg}
D^2 \tilde E(\bar\xi)\zeta\cdot\zeta=\sum_{i,j=1}^m \frac{\partial^2 \tilde E(\bar\xi)}{\partial\xi_i\partial\xi_j}\zeta_i\zeta_j=0
\end{equation}
Since
\[
0=D^2 \tilde E(\bar\xi)\zeta\cdot\zeta=\sum_{i=1}^m D^2 E_i(\bar\xi)\zeta\cdot\zeta
\]
while all the terms are nonnegative, we conclude that
\begin{equation}\label{deg1}
D^2 E_i(\bar\xi)\zeta\cdot\zeta=0, \quad i=1,\ldots,m.
\end{equation}
By Lemma~\ref{lem1} for $i=1,\ldots,m-1$ the function $E_i(\bar\xi)$ is strictly convex as a function of two variables $\xi_i,\xi_{i+1}$ and it follows from (\ref{deg1}) that $\zeta_i=\zeta_{i+1}=0$, $i=1,\ldots,m-1$. Observe that in the case $m=1$ there are no such $i$. In this case we apply (\ref{deg1}) for $i=m$. Taking into account Remark~\ref{rem1}, we find that $E_m(\bar\xi)$ is a strictly convex function of the single variable $\xi_m$, and it follows from (\ref{deg1}) that $\zeta_m=0$. In any case we obtain that the vector $\zeta=0$. Thus, relation (\ref{deg}) can hold only for zero $\zeta$, that is, the matrix $D^2\tilde E(\bar\xi)$ is (strictly) positive definite, and the function $\tilde E(\bar\xi)$ is strictly convex. This completes the proof.
\end{proof}
Propositions~\ref{th1},\ref{th2} imply the main result of this section.

\begin{theorem}\label{th3}
There exists a unique self-similar solution (\ref{3}) of problem (\ref{1}), (\ref{St}), (\ref{2}), and it corresponds to the minimum of strictly convex and coercive function (\ref{5}).
\end{theorem}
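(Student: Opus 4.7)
The plan is to combine Propositions~\ref{th1} and~\ref{th2} with the variational reformulation of the free-boundary system~(\ref{4}). First I would recall that, by the derivation preceding~(\ref{5}), the system~(\ref{4}) is exactly the Euler--Lagrange equation $\nabla E(\bar\xi)=0$ on the open convex domain $\Omega$, so constructing a self-similar solution of the form~(\ref{3}) reduces to finding a critical point of $E$ in $\Omega$.

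Next I would invoke Proposition~\ref{th1} to produce existence: the sub-level sets $\Omega_c$ are compact for every $c>0$, hence for any $c>\inf_\Omega E$ the continuous function $E$ attains its infimum $N$ on $\Omega_c$, at some interior point $\bar\xi_0\in\Omega$. Since $\bar\xi_0$ is a minimum and $E\in C^\infty(\Omega)$, the gradient vanishes there, so $\bar\xi_0$ solves~(\ref{4}); plugging the coordinates of $\bar\xi_0$ into~(\ref{3}) yields a self-similar solution of~(\ref{1}),~(\ref{St}),~(\ref{2}). This is exactly the content of Theorem~\ref{th1a}, which I would cite.

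For uniqueness I would use Proposition~\ref{th2}: a strictly convex $C^2$ function on a convex open set has at most one critical point, because if $\bar\xi_0$ and $\bar\xi_1$ were two distinct critical points then the restriction $\varphi(t)=E((1-t)\bar\xi_0+t\bar\xi_1)$ would be strictly convex on $[0,1]$, yet $\varphi'(0)=\varphi'(1)=0$, which is impossible. Hence the minimizer $\bar\xi_0$ is the unique critical point of $E$ in $\Omega$, and consequently~(\ref{4}) has a unique solution. Since any self-similar solution of the form~(\ref{3}) is determined by its free-boundary vector $\bar\xi$, which must satisfy~(\ref{4}), the self-similar solution is unique.

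There is no real obstacle at this stage: the substantive analytic work has already been done in Propositions~\ref{th1} and~\ref{th2}. The only thing to take a little care with is the logical chain, namely that~(\ref{4}) is \emph{equivalent} to $\nabla E=0$ on $\Omega$ (so that critical points and Stefan solutions correspond bijectively), and that the minimizer supplied by coercivity lies in the \emph{open} domain $\Omega$ rather than escaping to its boundary; the latter is precisely what the compactness of $\Omega_c$ in Proposition~\ref{th1} (with its explicit bounds $r_1,r_2,\delta$) guarantees. Assembling these three points gives Theorem~\ref{th3}.
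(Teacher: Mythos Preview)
Your proposal is correct and follows exactly the paper's own route: the paper simply states that Propositions~\ref{th1} and~\ref{th2} imply Theorem~\ref{th3}, after having observed (before Proposition~\ref{th2}) that a strictly convex function has at most one critical point and (after Proposition~\ref{th1}) that the minimizer solves~(\ref{4}). Your write-up just spells out these implications a little more explicitly, including the one-line argument for uniqueness of critical points and the remark that compactness of $\Omega_c$ keeps the minimizer inside the open domain $\Omega$.
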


\begin{remark}\label{rem2} In paper \cite{Pan2} Stefan problem (\ref{1}), (\ref{St}) was studied in the half-plane $t>0$, $x\in\R$ with Riemann initial condition
\begin{equation}
\label{R}
u(0,x)=\left\{\begin{array}{lr} u_+, & x>0, \\ u_-, & x<0. \end{array}\right.
\end{equation}
Solutions of this problem have the same structure as in (\ref{3}) and correspond to a unique minimum point of the function similar to (\ref{5}) with only the difference that the parameters $\xi_i$ are not necessarily positive and can take arbitrary real values. In this section we mainly follow the scheme of paper \cite{Pan2}.

Remark also that in paper \cite{Pan1} the Stefan-Riemann problem (\ref{1}), (\ref{St}), (\ref{R}) was studied in the case of arbitrary (possibly negative) latent specific heats $d_i$. We found a necessary and sufficient condition for coercivity of $E(\bar\xi)$, as well as a stronger sufficient condition of its strict convexity. The similar results can be obtained for the Stefan-Dirichlet problem (\ref{1}), (\ref{St}), (\ref{2}).
\end{remark}

\section{The case of infinitely many phase transitions}

In this small section we consider the exotic case when the number $m$ of phase transitions is infinite. More precisely, we suppose that the phase transitions temperatures $u_i\ge u_0$ form a strictly increasing sequence, $u_{i+1}>u_i$, $i\in\N$. In (\ref{1}) there are infinitely many phases, the $i$-th phase corresponds to the temperature $u\in (u_i,u_{i+1})$, $i=\{0\}\cup\N$. The parameters $a_i,k_i>0$ are, respectively, the diffusivity constant and the thermal conductivity of the $i$-th phase, $i=0,1,\ldots$; $d_i\ge 0$ is the latent specific heat of the $i$-th phase transition,
$i=1,2,\ldots$. As in the previous section, we assume that $d_1>0$ whenever $u_1=u_0$. We will study the problem (\ref{1}), (\ref{St}), (\ref{2}) with possibly infinite Dirichlet data $u_D=\lim\limits_{i\to\infty} u_i\le+\infty$. A self-similar solution $u=u(\xi)$, $\xi=x/\sqrt{t}$, of this problem is given by expression (\ref{3}), where now $i$ runs over all nonnegative integers. The Stefan conditions (\ref{St}) reduce again to (now infinite) system (\ref{4}), which can be written in the form $\frac{\partial}{\partial\xi_i} E(\bar\xi)=0$, where the functional
\begin{align}\label{5i}
E(\bar\xi)=-\sum_{i=0}^\infty  k_i(u_{i+1}-u_i)\ln (F(\xi_i/a_i)-F(\xi_{i+1}/a_i))+\sum_{i=1}^\infty d_i\xi_i^2/4, \\ \nonumber \bar\xi=(\xi_i)_{i\in\N}\in\Omega,
\end{align}
\[\Omega=\{ \ \bar\xi=(\xi_i)_{i\in\N}\in l_\infty \ | \ \xi_i>\xi_{i+1}>0 \ \forall i\in\N \ \}. \]
Here, as usual, $l_\infty$ is the space of bounded sequences equipped with the norm $\|\bar\xi\|_\infty=\sup |\xi_i|$.
As is well-known, this space is dual to the space of summable sequences $l_1$. Observe also that there is only finite number of terms in (\ref{5i}) depending on a fixed variable $\xi_i$. Therefore, the partial derivatives $\frac{\partial}{\partial\xi_i} E(\bar\xi)$ are well defined whenever the value $E(\bar\xi)$ is finite.

We do not include the natural requirement $\lim\limits_{i\to\infty}\xi_i=0$ in the definition of $\Omega$ because this requirement spoils coercivity of the functional $E$ in the weak-$*$ topology. Observe that the functional $E$ may take the value $+\infty$, moreover, it can happen that $E\equiv+\infty$. Assuming that the latter does not however happen, i.e. the functional $E$ is proper, $E\not\equiv+\infty$, we will show that this functional admits a unique global minimum point. For that we need some nice properties of $E$ collected below.

\begin{proposition}\label{pro1}

(i) The functional $E(\bar\xi)$ is low semi-continuous in weak-$*$ topology;

(ii) It is coercive, that is, the sets $\Omega(c)=\{ \bar\xi\in\Omega \ | \ E(\bar\xi)\le c \ \}$ are weakly-$*$ compact;

(iii) The functional $E(\bar\xi)$ is strictly convex on $\Omega$.
\end{proposition}

\begin{proof}
It is known that weak-$*$ convergence $\bar\xi_n\rightharpoonup \bar\xi$ of a sequence $\bar\xi_n=(\xi_i^n)_{i\in\N}\in l_\infty$, $n\in\N$, is equivalent to uniform boundedness $|\xi_{ni}|\le\const$ and elementwise convergence $\xi_i^n\mathop{\to}\limits_{n\to\infty} \xi_i$ of this sequence. Assume that $\bar\xi_n\in\Omega$, $\bar\xi\in\Omega$, and $\bar\xi_n\rightharpoonup \bar\xi$ weakly-$*$ in $l_\infty$. Then $\xi_i^n\to \xi_i$ as $n\to\infty$ for each $i\in\N$. Since all terms in formula (\ref{5i}) are nonnegative, we can apply Fatou's lemma for sums and conclude that
\begin{align*}
E(\bar\xi)=\sum_{i=0}^\infty  -k_i(u_{i+1}-u_i)\ln (F(\xi_i/a_i)-F(\xi_{i+1}/a_i))+\sum_{i=1}^\infty d_i\xi_i^2/4\le \\ \liminf_{n\to\infty} \{\sum_{i=0}^\infty-k_i(u_{i+1}-u_i)\ln (F(\xi_i^n/a_i)-F(\xi_{i+1}^n/a_i))+\sum_{i=1}^\infty d_i(\xi_i^n)^2/4\}=\liminf_{n\to\infty} E(\bar\xi_n).
\end{align*}
Hence, the functional $E$ is weakly-$*$ low semi-continuous, and (i) is proven.

To prove (ii) we first notice that $\Omega(c)=\emptyset$ if $c\le 0$ and we can suppose that $c>0$. If $\bar\xi=(\xi_i)_{i\in\N}\in\Omega(c)$ then relations (\ref{co1a}), (\ref{co1b}) hold for all $i\in\{0\}\cup\N$. As in the proof of Proposition~\ref{th1}, we derive from these relations that
\[
\xi_1\le r_2=\left\{\begin{array}{lcr}a_0F^{-1}(1-e^{-c/(k_0(u_1-u_0))}) & , & u_0<u_1, \\
(4c/d_1)^{1/2} & , & u_0=u_1. \end{array}\right.
\]
Further, it follows from (\ref{co1a}) that for every $i\in\N$
\[
F(\xi_i/a_i)-F(\xi_{i+1}/a_i)\ge \delta_i\doteq\exp(-c/\alpha_i)>0,
\]
where $\alpha_i=k_i(u_{i+1}-u_i)>0$. Since $F(\xi)$ is Lipschitz with constant $1$, this implies the inequalities
\[
\xi_i-\xi_{i+1}\ge a_i\delta_i>0, \quad i\in\N.
\]
Hence, $\Omega(c)$ is contained in the set
\[
K=\{ \bar\xi\in l_\infty \ | \ 0\le\xi_i\le r_2, \ \xi_i-\xi_{i+1}\ge a_i\delta_i \ \forall i\in\N \ \}.
\]
Obviously, this set is bounded and weakly-$*$ closed in $l_\infty$. By the Banach-Alaoglu theorem
the set $K$ is weakly-$*$ compact. Since $E$ is weakly-$*$ semi-continuous, the set $\Omega(c)$ is a closed subset of $K$ and, therefore, is compact. Coercivity of $E$ is proved.

To prove (iii), we observe that the functional $E(\bar\xi)$ is convex as a sum of the convex functionals $-k_i(u_{i+1}-u_i)\ln (F(\xi_i/a_i)-F(\xi_{i+1}/a_i))$ and $d_i\xi_i^2/4$. By the same reason for each $m\in\N$ the functional
\[
R_m(\bar\xi)=-\sum_{i=m}^\infty  k_i(u_{i+1}-u_i)\ln (F(\xi_i/a_i)-F(\xi_{i+1}/a_i))+\sum_{i=m+1}^\infty d_i\xi_i^2/4
\]
is convex while the function
\[ E_m(\bar\xi_m)=-\sum_{i=0}^{m-1} k_i(u_{i+1}-u_i)\ln (F(\xi_i/a_i)-F(\xi_{i+1}/a_i))+\sum_{i=1}^m d_i\xi_i^2/4, \quad
\bar\xi_m=(\xi_1,\ldots,\xi_m)
\]
is strictly convex on $\R^m$, this can be established on the base of Lemma~\ref{lem1} in the same way as in the proof of Proposition~\ref{th2}. It is clear that $E(\bar\xi)=E_m(\bar\xi)+R_m(\bar\xi_m)$. Now we take different points $\bar\xi^1,\bar\xi^2\in\Omega$ and $\alpha\in (0,1)$. Since $\bar\xi^1\not=\bar\xi^2$ then we can find such $m\in\N$ that
$\bar\xi^1_m\not=\bar\xi^2_m$, where $\bar\xi^n_m$, $n=1,2$, are the vectors in $\R^m$ formed by the first $m$ elements of $\bar\xi^n$. Since the function $E_m$ is strictly convex then
\[ E_m((1-\alpha)\bar\xi^1_m+\alpha\bar\xi^2_m)<(1-\alpha)E_m(\bar\xi^1_m)+\alpha E_m(\bar\xi^2_m)\]
while
\[ R_m((1-\alpha)\bar\xi^1+\alpha\bar\xi^2)\le (1-\alpha)R_m(\bar\xi^1)+\alpha R_m(\bar\xi^2)\]
by the convexity of $R_m$. Therefore,
\[ E((1-\alpha)\bar\xi^1+\alpha\bar\xi^2)<(1-\alpha)E(\bar\xi^1)+\alpha E(\bar\xi^2).\]
This proves the strict convexity of $E(\bar\xi)$.
\end{proof}

In the case $E(\bar\xi)\not\equiv +\infty$ Proposition~\ref{pro1} allows to establish existence of a solution to the problem (\ref{1}), (\ref{St}), (\ref{2}) with infinite number of phase transitions. More precisely, the following statement holds.

\begin{theorem}\label{th1i}
Assume that the potential $E(\bar\xi)$ is a proper functional, i.e. $N=\inf E(\bar\xi)<+\infty$. Then there exists a unique
minimum point $\bar\xi^0=(\xi^0_i)_{i\in\N}\in\Omega$ of $E(\bar\xi)$, i.e., $E(\bar\xi^0)=N$. Moreover, $\lim\limits_{i\to\infty}\xi^0_i=0$, and function (\ref{3})
\begin{align}\label{3i}
u(\xi)=u_i+\frac{u_{i+1}-u_i}{F(\xi^0_{i+1}/a_i)-F(\xi^0_i/a_i)}(F(\xi/a_i)-F(\xi^0_i/a_i)), \\ \nonumber
\xi^0_{i+1}<\xi<\xi^0_i, \ i=0,1,\ldots
\end{align}
is a solution of (\ref{1}), (\ref{St}), (\ref{2}).
\end{theorem}

\begin{proof}
We define for $n\in\N$ the sub-level sets $K_n=\Omega(N+1/n)$ where $E\le N+1/n$. Then these sets are nonempty. By Proposition~\ref{pro1} they are weakly-$*$ compact. Obviously, $K_{n+1}\subset K_n$ $\forall n\in\N$. By Cantor's intersection theorem there exists a point $\bar\xi^0\in \bigcap\limits_{n\in\N} K_n$. Then, $N\le E(\bar\xi^0)\le N+1/n$ for all $n\in\N$, which implies that $E(\bar\xi^0)=N$, and $\bar\xi^0$ is a point of global minimum of $E$. Uniqueness of this point directly follows from the strict convexity of $E$ stated in Proposition~\ref{pro1}(iii). It only remains to prove that the sequence
$\bar\xi^0=(\xi^0_i)_{i\in\N}$ vanishes. Assuming the contrary, we will have $\displaystyle \lim_{i\to\infty} \xi^0_i=\inf_{i\in\N}\xi^0_i=r>0$. Then the sequence $\bar\xi^r=\bar\xi^0-r$ with components $\xi^0_i-r$ lies in $\Omega$.
Since $1-F((\xi^0_1-r)/a_0)>1-F(\xi^0_1)/a_0$ and for all $i\in\N$ $(\xi^0_i-r)^2<(\xi^0_i)^2$,
\begin{align*}
F((\xi^0_i-r)/a_i)-F((\xi^0_{i+1}-r)/a_i)=\frac{1}{\sqrt{\pi}}\int_{(\xi^0_{i+1}-r)/a_i}^{(\xi^0_i-r)/a_i} e^{-s^2/4}ds>\\ \frac{1}{\sqrt{\pi}}\int_{\xi^0_{i+1}/a_i}^{\xi^0_i/a_i} e^{-s^2/4}ds= F(\xi^0_i/a_i)-F(\xi^0_{i+1}/a_i),
\end{align*}
all the terms in expression (\ref{5i}) became smaller if we replace $\bar\xi^0$ by $\bar\xi^r$. Therefore,
$E(\bar\xi^r)<E(\bar\xi^0)=N$, which is impossible. Hence, $\lim\limits_{i\to\infty}\xi^0_i=0$. Since $\bar\xi^0$ is a minimum point of $E$ then $\frac{\partial}{\partial\xi_i}E(\bar\xi^0)=0$ for all $i\in\N$ and Stefan conditions (\ref{St}) are satisfied. We conclude that (\ref{3i}) is a solution of our problem (\ref{1}), (\ref{St}), (\ref{2}).
\end{proof}

\begin{example}
Assume that $k_i=a_i^2=1$, $i\ge 0$; $d_i=0$, $i>0$. Then Stefan conditions (\ref{St}) simply means that $u(\xi)$ is a self-similar solution of the heat equation $u_t=u_{xx}$. Therefore, $u(\xi)=C_1F(\xi)+C_2$, $C_1,C_2=\const$, and $u(\xi)$ is a bounded function. In particular, a solution of (\ref{1}), (\ref{St}), (\ref{2}) with $u_D=\lim\limits_{i\to\infty} u_i=+\infty$ does not exists.
As is easy to verify, in this case $E(\bar\xi)\equiv +\infty$, so that the assumption of Theorem~\ref{th1i} is violated. On the other hand, if $u_D<+\infty$ then a unique solution of (\ref{1}), (\ref{St}), (\ref{2}) has the form
$u(\xi)=u_D-(u_D-u_0)F(\xi)$. Solving the equations $u(\xi)=u_i$, we find $\xi_i=F^{-1}((u_D-u_i)/(u_D-u_0))$. Hence,
\begin{align*}
E(\bar\xi)=-\sum_{i=0}^\infty (u_{i+1}-u_i)\ln(F(\xi_i)-F(\xi_{i+1}))=\sum_{i=0}^\infty (u_{i+1}-u_i)(\ln(u_D-u_0)-\ln(u_{i+1}-u_i)) \\ =(u_D-u_0)\ln(u_D-u_0)-\sum_{i=0}^\infty (u_{i+1}-u_i)\ln(u_{i+1}-u_i).
\end{align*}
As follows from Theorem~\ref{th1i} and the uniqueness of our solution, this value is the minimum value of $E$ whenever the functional $E(\bar\xi)$ is proper. In particular, $E(\bar\xi)\equiv +\infty$ if (and only if) the series $\displaystyle \sum_{i=0}^\infty (u_{i+1}-u_i)\ln(u_{i+1}-u_i)$ is divergent.
\end{example}

\section{Stefan problem with Neumann boundary condition}

Now we return to the case of finite $m$ and consider the Stefan problem (\ref{1}), (\ref{St}),
with the constant initial data $u_0$ and with Neumann boundary condition:
\begin{equation}\label{Neu}
u(0,x)=u_0 \ \forall x>0, \quad \alpha(u)_x(t,0)=t^{-1/2} b_N \ \forall t>0,
\end{equation}
where $\alpha(u)$ is the diffusion function in equation (\ref{diff}), now defined on infinite interval $[u_0,+\infty)$ (we take $u_{m+1}=+\infty$), and $b_N<0$ is a constant. The specific form of Neumann boundary data is connected with the requirement of invariance of our problem under the scaling transformations $(t,x)\to (\lambda^2 t,\lambda x)$, $\lambda>0$. This allows to concentrate on the study of self-similar solutions $u=u(x/\sqrt{t})$ of the problem (\ref{1}), (\ref{St}), (\ref{Neu}).
For such solutions conditions (\ref{Neu}) reduce to the requirements
\begin{equation}\label{Neu1}
\alpha(u)'(0)=b_N, \quad u(+\infty)=u_0.
\end{equation}
Since $\alpha(u)$ is a strictly increasing function and $b_N<0$, we will assume that the function $u(\xi)$ decreases. The case $b_N>0$ corresponds to an increasing $u(\xi)$ and can be treated similarly. For homogeneous Neumann problem $b_N=0$ there is only the constant solution $u\equiv u_0$.
Let $u_i$, $i=1,\ldots,m$, be all phase transition temperatures in the interval $[u_0,+\infty)$ so that
$u_0\le u_1<\cdots<u_m<u_{m+1}=+\infty$. As in Section~\ref{sec1}, we suppose that the latent specific heat $d_1>0$ if $u_1=u_0$.
Assume that $u=u(\xi)$ is a decreasing self-similar solution of (\ref{1}), (\ref{St}), (\ref{Neu}). Then $u(0)>u_0$ and there is an integer $n$, $0\le n\le m$, such that $u_n<u(0)\le u_{n+1}$. We call this number $n$ (i.e., the number of phase transitions) a type of solution $u$. The Neumann condition for a solution of type $n$ reads $k_nu'(0)=b_N$
(notice that $k_nu_x$ is exactly the heat flow through the boundary point $x=0$).
A solution of type $0$ does not contain free boundaries and can be found by the formula
\begin{equation}\label{t0}
u(\xi)=u_0+\frac{a_0}{k_0}b_N\sqrt{\pi}(F(\xi/a_0)-1).
\end{equation}
As is easy to verify, $k_0u'(0)=b_N$, $u(+\infty)=u_0$ and requirement (\ref{Neu1}) is satisfied. By easy computation we find $u(0)=u_0-a_0b_N\sqrt{\pi}/k_0$, therefore, the necessary and sufficient condition for existence of a solution (\ref{t0}) is the
inequality $u_0-a_0b_N\sqrt{\pi}/k_0\le u_1$, which can be written in the form
\begin{equation}\label{con0}
-b_N\le \gamma_1\doteq\frac{k_0(u_1-u_0)}{a_0\sqrt{\pi}}.
\end{equation}
In particular, a solution of type $0$ does not exist if $u_1=u_0$.
A solution of type $n>0$ has structure similar to (\ref{3}) (with $m=n$)
\begin{align}\label{tki}
u(\xi)=u_i+\frac{u_{i+1}-u_i}{F(\xi_{i+1}/a_i)-F(\xi_i/a_i)}(F(\xi/a_i)-F(\xi_i/a_i)), \quad
\xi_{i+1}<\xi<\xi_i, \ i=0,\ldots,n-1, \\
\label{tkn}
u(\xi)=u_n+\frac{a_n}{k_n}b_N\sqrt{\pi}(F(\xi/a_n)-F(\xi_n/a_n)), \quad 0<\xi<\xi_n.
\end{align}
The necessary (but not sufficient, as we will soon realize) condition $u(0)\le u_{n+1}$ has the form
\begin{equation}\label{conn}
-b_N\sqrt{\pi}F(\xi_n/a_n)\le k_n(u_{n+1}-u_n)/a_n
\end{equation}
(if $n=m$ then it is always fulfilled since $u_{m+1}=+\infty$).

Firstly, we investigate the uniqueness of a solution of our Stefan-Neumann problem. If $u=u(t,x)$ is a solution of
(\ref{1}), (\ref{St}), (\ref{Neu}) then the even extension $\tilde u(t,x)=u(t,|x|)$ is continuous on the half-plane $\Pi=(0,+\infty)\times\R$ and it is a distributional solution of (\ref{diff}) in a domain $(t,x)\in\Pi$, $x\not=0$.
Now, the increasing functions $\beta(u)$, $\alpha(u)$ are defined on $[u_0,+\infty)$ by the same relations as in (\ref{diff}):
$\alpha'(u)=k_i$, $\beta'(u)=k_i/a_i^2$ on intervals $(u_i,u_{i+1})$, $i=0,\ldots,m$; $\alpha(u_i+)-\alpha(u_i-)=0$,
$\beta(u_i+)-\beta(u_i-)=d_i$, $i=1,\ldots,m$.

Since the jump of $\alpha(\tilde u)_x$ at the line $x=0$
\[[\alpha(\tilde u)_x]=\alpha(\tilde u)_x(t,0+)-\alpha(\tilde u)_x(t,0-)=2\alpha(u)_x(t,0+)=2b_Nt^{-1/2},\]
it follows that
\begin{equation}\label{diff1}
\beta(\tilde u)_t-\alpha(\tilde u)_{xx}=-2b_Nt^{-1/2}\delta(x) \ \mbox{ in } \D'(\Pi),
\end{equation}
where $\delta(x)$ is the Dirac $\delta$-function. Besides, the initial condition $\beta(\tilde u)(0,x)=\beta(u_0)$ holds in the sense of point-wise convergence as $t\to 0+$ for all $x\not=0$.
Notice that the function $\beta(u)$ has a jump at $u=u_0$ if $u_1=u_0$. In this case $\beta(\tilde u)(t,x)=\beta(u_0)$
in the set $|x|>\xi_1\sqrt{t}$ by (\ref{tki}) with $i=0$.

Now, we are ready to prove the uniqueness.

\begin{theorem}\label{thun}
A solution $u=u(\xi)$ of the problem (\ref{1}), (\ref{St}), (\ref{Neu}) having the self-similar form (\ref{tki}), (\ref{tkn}) (for some $n\in \overline{0,m}$, which may depend on a solution) is unique.
\end{theorem}

\begin{proof}
Assume that $u_1$, $u_2$ are two solutions of (\ref{1}), (\ref{St}), (\ref{Neu}) (possibly, of different types), and
$\tilde u_i(t,x)=u_i(t,|x|)$, $i=1,2$. In view of (\ref{diff1}) we have
\begin{equation}\label{7}
(\beta(\tilde u_1)-\beta(\tilde u_2))_t-(\alpha(\tilde u_1)-\alpha(\tilde u_2))_{xx}=0 \ \mbox{ in } \D'(\Pi).
\end{equation}
Remark that for all $u,v\ge u_0$, $u\not=v$
\begin{equation}\label{8}
0<\frac{\alpha(u)-\alpha(v)}{\beta(u)-\beta(v)}\le \max a_i^2.
\end{equation}
We introduce the function $P=P(t,x)=\beta(\tilde u_1)-\beta(\tilde u_2)\in L^\infty(\Pi)$.

As follows from the representation (\ref{tki}) with $i=0$, for large $\xi=|x|/\sqrt{t}$
\begin{equation}\label{9}
|P(t,x)|=|\beta(\tilde u_1)-\beta(\tilde u_2)|=\frac{k_0}{a_0^2}|u_1(\xi)-u_2(\xi)|=c(1-F(\xi/a_0)), \quad c=\const
\end{equation}
(moreover, $c=0$ in the case $u_1=u_0$).
By the L'H\^{o}pital's rule and the identity ${F''(y)=-\frac{y}{2}F'(y)}$
\[
\lim_{y\to+\infty}\frac{2y^{-1}F'(y)}{1-F(y)}=2\lim_{y\to+\infty}\frac{y^{-2}F'(y)-y^{-1}F''(y)}{F'(y)}=
2\lim_{y\to+\infty} [y^{-2}+y^{-1}y/2)]=1.
\]
Therefore, $1-F(y)\sim 2y^{-1}F'(y)=2y^{-1}e^{-y^2/4}$ as $y\to+\infty$. This implies that for large~$\xi$ \[|P(t,x)|\le\const\cdot\xi^{-1}e^{-\xi^2/(4a_0^2)}.\]
Therefore, $P(t,\cdot)\in L^2(\R)$ and $\|P(t,\cdot)\|_2=ct^{1/4}$, $c=\const$. In particular, $P(t,\cdot)\to 0$
as $t\to 0+$ in $L^2(\R)$.
Similar statements hold for the function $Q=\alpha(\tilde u_1)-\alpha(\tilde u_2)$ because $Q=CP$, where $C=C(t,x)=(\alpha(\tilde u_1)-\alpha(\tilde u_2))/(\beta(\tilde u_1)-\beta(\tilde u_2))$ (if $\tilde u_1=\tilde u_2$ we set $C=0$) is a nonnegative bounded function, in view of (\ref{8}).

Hence, we can apply (\ref{7}) to a test function $f=f(t,x)$ from the Sobolev space $W_2^{1,2}(\Pi_T)$, $\Pi_T=(0,T)\times\R$ (so that $f,f_t,f_x,f_{xx}\in L^2(\Pi_T)$) such that $f(T,x)=0$. As a result, we obtain the relation
\begin{equation}\label{m5}
\int_{\Pi_T} P(t,x)[f_t+Cf_x]dtdx=0.
\end{equation}
Let $F(t,x)\in C_0^1(\Pi_T)$, $\varepsilon>0$, and $f^\varepsilon=f^\varepsilon(t,x)\in W_2^{1,2}(\Pi_T)$ be a solution of
the backward Cauchy problem
\[
f_t+(C+\varepsilon)f_{xx}=F, \quad f(T,x)=0.
\]
As is demonstrated in \cite{Kam,LSU} in general multidimensional case, such a solution exists and satisfies the estimate
\begin{equation}\label{m7}
\|f_{xx}^\varepsilon\|_2\le C_0/\sqrt{\varepsilon},
\end{equation}
where $C_0$ is a constant independent of $\varepsilon$ and $\|\cdot\|_2=\|\cdot\|_{L^2(\Pi_T)}$. It follows from (\ref{m5}) with $f=f^\varepsilon$ that
\begin{equation}\label{m8}
\int_{\Pi_T} P(t,x)F(t,x)dtdx=\varepsilon\int_{\Pi_T} P(t,x)f_{xx}^\varepsilon(t,x)dtdx.
\end{equation}
By the Cauchy-Bunyakovsky inequality and (\ref{m7})
\[\left|\int_{\Pi_T} P(t,x)f_{xx}^\varepsilon(t,x)dtdx\right|\le\|P\|_2\|f_{xx}^\varepsilon\|_2\le C_0\|P\|_2/\sqrt{\varepsilon},\]
and the right-hand side of (\ref{m8}) vanishes as $\varepsilon\to 0$. We conclude that
\[\int_{\Pi_T} P(t,x)F(t,x)dtdx=0\]
for all $F(t,x)\in C_0^1(\Pi_T)$ and all $T>0$. This means that $P=0$ a.e. on $\Pi$, that is, $u_1=u_2$.
\end{proof}

Assume that $u(\xi)$ is a solution (\ref{tki}), (\ref{tkn}) of type $n$.
On a phase transition lines $\xi=\xi_i$, the Stefan condition reads
\begin{align}\label{sysi}
d_i\xi_i/2+k_i\frac{(u_{i+1}-u_i)F'(\xi_i/a_i)}{a_i(F(\xi_{i+1}/a_i)-F(\xi_i/a_i))}- \nonumber\\
k_{i-1}\frac{(u_i-u_{i-1})F'(\xi_i/a_{i-1})}{a_{i-1}(F(\xi_i/a_{i-1})-F(\xi_{i-1}/a_{i-1}))}=0, \quad i=1,\ldots,n-1, \\
\label{sysn}
d_n\xi_n/2+b_N\sqrt{\pi} F'(\xi_n/a_n)- k_{n-1}\frac{(u_n-u_{n-1})F'(\xi_n/a_{n-1})}{a_{n-1}(F(\xi_n/a_{n-1})-F(\xi_{n-1}/a_{n-1}))}=0, \quad i=n.
\end{align}
Like in the case of Dirichlet boundary condition, this system turns out to be gradient one, it coincides with the equality $\nabla E_n=0$, where the function
\begin{align}\label{E}
E_n(\bar\xi)=-\sum_{i=0}^{n-1}k_i(u_{i+1}-u_i)\ln (F(\xi_i/a_i)-F(\xi_{i+1}/a_i))\nonumber\\
+a_nb_N\sqrt{\pi}F(\xi_n/a_n)+\frac{1}{4}\sum_{i=1}^n d_i\xi_i^2, \quad \bar\xi=(\xi_1,\ldots,\xi_n)\in\Omega_n,
\end{align}
$\Omega_n$ is an open convex cone in $\R^n$ consisting of vectors with strictly decreasing positive coordinates.
Remark that $F''(s)=-s/2F'(s)<0$ for all $s>0$. Since $b_N<0$, this implies that the term $a_nb_N\sqrt{\pi}F(\xi_n/a_n)$ is strictly convex function of single variable $\xi_n$ on the interval $[0,+\infty)$. In the same way as in the proof of Proposition~\ref{th2}
we find that the function
\[
\tilde E_n(\bar\xi)=-\sum_{i=0}^{n-1}k_i(u_{i+1}-u_i)\ln (F(\xi_i/a_i)-F(\xi_{i+1}/a_i))+\frac{1}{4}\sum_{i=1}^n d_i\xi_i^2
\]
is strictly convex on a cone
\[\bar\Omega_n=\{ \ \bar\xi=(\xi_1,\ldots,\xi_n)\in\R^n \ | \ \xi_1>\cdots>\xi_n\ge 0 \ \}\supset\Omega_n,\]
consisting of points with strictly decreasing nonnegative coordinates. Since
\[E_n(\bar\xi)=\tilde E_n(\bar\xi)+a_nb_N\sqrt{\pi}F(\xi_n/a_n),\] the function $E_n(\bar\xi)$ is strictly convex on $\bar\Omega_n$ as well.
Let us demonstrate that this function is coercive on $\bar\Omega_n$.

\begin{proposition}\label{th4}
For all $c\in\R$ the set $\bar\Omega_n(c)=\{ \ \bar\xi\in\bar\Omega_n \ | \ E_n(\bar\xi)\le c \ \}$ is compact.
\end{proposition}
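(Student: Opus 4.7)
The plan is to emulate the coercivity argument used to prove Proposition~\ref{th1}, with one extra observation that absorbs the additional Neumann term $k_na_nb_N\sqrt{\pi}F(\xi_n/a_n)$ appearing in~(\ref{E}). Since $b_N<0$ and $F$ takes values in $(0,1)$, this term lies in the bounded interval $[k_na_nb_N\sqrt{\pi},0]$. Setting $M=-k_na_nb_N\sqrt{\pi}>0$, the remaining contributions to $E(\bar\xi)$ are all nonnegative, so on any sublevel set $\bar\Omega_c$ each log term $-k_i(u_{i+1}-u_i)\ln(F(\xi_i/a_i)-F(\xi_{i+1}/a_i))$ and each quadratic term $d_i\xi_i^2/4$ is bounded by $c':=c+M$. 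If $c'<0$ then $\bar\Omega_c$ is trivially empty, so we may assume $c'\ge 0$.

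From this point the reasoning of Proposition~\ref{th1} goes through with $c$ replaced by $c'$. First, the $i=0$ log term produces an upper bound on $\xi_1$: if $u_1>u_0$ then $1-F(\xi_1/a_0)\ge e^{-c'/(k_0(u_1-u_0))}$ gives $\xi_1\le a_0 F^{-1}(1-e^{-c'/(k_0(u_1-u_0))})$, while in the case $u_1=u_0$ (where $d_1>0$ by assumption) the bound $d_1\xi_1^2/4\le c'$ yields $\xi_1\le(4c'/d_1)^{1/2}$. Second, for each $i=1,\ldots,n-1$, the bound on the middle log term combined with the Lipschitz estimate $F'\le 1/\sqrt{\pi}<1$ produces a uniform gap $\xi_i-\xi_{i+1}\ge\delta$ for some $\delta>0$. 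No lower bound on $\xi_n$ is required, because $\xi_n\ge 0$ is already built into the definition of $\bar\Omega$.

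These estimates confine $\bar\Omega_c$ to the set
\[
K=\{\bar\xi\in\R^n\mid\xi_1\le r_2,\ \xi_i-\xi_{i+1}\ge\delta\ \forall i=1,\ldots,n-1,\ \xi_n\ge 0\},
\]
which is compact, and whose strictly positive interior gaps guarantee $K\subset\bar\Omega$. On $K$ the log arguments $F(\xi_i/a_i)-F(\xi_{i+1}/a_i)$ stay bounded away from $0$, so $E$ is continuous on $K$, and consequently $\bar\Omega_c=\{\bar\xi\in K\mid E(\bar\xi)\le c\}$ is closed in $K$, hence compact.

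The only substantive difference from the Dirichlet case is that $E$ is no longer nonnegative: the Neumann contribution is negative and prevents a direct bound on the positive terms by $c$. The main (small) obstacle is thus to notice that this negative contribution is nonetheless uniformly bounded below by $-M$, so it can simply be absorbed into the enlarged working constant $c'=c+M$, after which every estimate from the Dirichlet proof transfers without further change.
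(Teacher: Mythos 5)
Your proposal is correct and follows essentially the same route as the paper: the paper likewise absorbs the Neumann term by bounding $E(\bar\xi)-k_na_nb_N\sqrt{\pi}F(\xi_n/a_n)\le c_1:=c-k_na_nb_N\sqrt{\pi}$ (your $c'=c+M$), notes emptiness when $c_1<0$, and then reruns the Dirichlet estimates with $c_1$ in place of $c$, using $\xi_n\ge 0$ from the definition of $\bar\Omega$ instead of a positive lower bound on $\xi_n$. No gaps.
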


\begin{proof}
Suppose that $\bar\xi\in\bar\Omega_n$, $E_n(\bar\xi)\le c$. Then
\begin{align*}
\tilde E_n(\bar\xi)\doteq -\sum_{i=0}^{n-1}k_i(u_{i+1}-u_i)\ln (F(\xi_i/a_i)-F(\xi_{i+1}/a_i))+ \\
\frac{1}{4}\sum_{i=1}^n d_i\xi_i^2=E(\bar\xi)-a_nb_N\sqrt{\pi}F(\xi_n/a_n)\le c_1\doteq c-a_nb_N\sqrt{\pi}.
\end{align*}
Since all the terms of the left-hand side of this inequality are nonnegative, we obtain the relations
\begin{align}\label{co2a}
-k_i(u_{i+1}-u_i)\ln (F(\xi_i/a_i)-F(\xi_{i+1}/a_i))\le c_1, \ i=0,\ldots,n-1, \\
\label{co2b}
d_i\xi_i^2/4\le c_1, \ i=1,\ldots,n,
\end{align}
the same as inequalities (\ref{co1a}), (\ref{co1b}). As follows from (\ref{co2a}), (\ref{co2b}), the set $\bar\Omega_n(c)=\emptyset$ if $c_1<0$. Therefore, we may (and will) suppose that $c_1\ge 0$. Arguing as in the proof of Proposition~\ref{th1}, we derive from (\ref{co2a}), (\ref{co2b}) the bounds
\begin{align*}
\xi_1\le r_2=\left\{\begin{array}{lcr}a_0F^{-1}(1-e^{-c_1/(k_0(u_1-u_0))}) & , & u_0<u_1, \\
(4c_1/d_1)^{1/2} & , & u_0=u_1, \end{array}\right. \\
(\xi_i-\xi_{i+1})/a_i\ge \delta=\exp(-c_1/\alpha)>0, \quad i=1,\ldots,n-1,
\end{align*}
where $\displaystyle\alpha=\min_{i=1,\ldots,m-1}k_i(u_{i+1}-u_i)>0$. Thus, the set $\bar\Omega_n(c)$ is contained in a compact
\[
K=\{ \ \bar\xi=(\xi_1,\ldots,\xi_n)\in\R^n \ | \ r_2\ge\xi_1\ge\cdots\ge\xi_n\ge 0, \ \xi_i-\xi_{i+1}\ge\delta a_i \ \forall i=1,\ldots,n-1 \ \}.
\]
Since $E_n(\bar\xi)$ is continuous on $K$, the set $\bar\Omega_n(c)$ is a closed subset of $K$ and therefore is compact. This completes the proof.
\end{proof}

It follows from Proposition~\ref{th4} and the strict convexity of function $E_n$ that there exists a point
$\bar\xi^n=(\xi_1^n,\ldots,\xi_n^n)\in\bar\Omega_n$ of global minimum of $E_n$, and it is a unique local minimum of this function. There are two possible cases:

A) $\bar\xi^n\in\Omega_n$, i.e. $\xi_n^n>0$. If, in addition, condition (\ref{conn}) is satisfied then there exists a unique solution (\ref{tki}), (\ref{tkn}) of type $n$
with $\xi_i=\xi_i^n$, $i=1,\ldots,n$;

B) $\bar\xi^n\notin\Omega_n$, i.e. $\xi_n^n=0$. Then a solution of type $n$ does not exist. Let us investigate this case more precisely. The necessary and sufficient conditions for the point $\bar\xi^n=(\xi_1^n,\ldots,\xi_{n-1}^n,0)$ to be a minimum point of $E_n(\bar\xi)$ are the following
\begin{align}
\label{mc1}
\frac{\partial}{\partial\xi_i} E_n(\bar\xi^n)=0, \ i=1,\ldots,n-1, \\
\label{mc2}
\frac{\partial}{\partial\xi_n} E_n(\bar\xi^n)\ge 0,
\end{align}
where condition (\ref{mc1}) appears only if $n>1$. Notice that for such $n$
\begin{align*}
E_n(\xi_1,\ldots,\xi_{n-1},0)=-\sum_{i=0}^{n-1}k_i(u_{i+1}-u_i)\ln (F(\xi_i/a_i)-F(\xi_{i+1}/a_i))
+\frac{1}{4}\sum_{i=1}^{n-1} d_i\xi_i^2, \\ \xi_n=0, \ (\xi_1,\ldots,\xi_{n-1})\in\Omega_{n-1}.
\end{align*}
We see that $E(\xi_1,\ldots,\xi_{n-1})\doteq E_n(\xi_1,\ldots,\xi_{n-1},0)$ coincides with function (\ref{5}) with $m=n-1$, corresponding to
Stefan-Dirichlet problem (\ref{1}), (\ref{St}), (\ref{2}) with $u_D=u_n$.
Relation (\ref{mc1}) means that $\nabla E(\xi_1,\ldots,\xi_{n-1})=0$, that is, $(\xi_1^0,\ldots,\xi_{n-1}^0)\in\Omega_{n-1}$ is a unique minimal point of $E(\xi_1,\ldots,\xi_{n-1})$. According to Theorem~\ref{th3}, the coordinates $\xi_i^n$, $i=1,\ldots,n-1$, coincide with the phase transition parameters $\xi_i$ of the unique solution (\ref{3}) of problem (\ref{1}), (\ref{St}), (\ref{2}) with $u_D=u_n$ (in particular, they do not depend on the Neumann data $b_N$ and on parameters $a_i$, $k_i$, $d_i$ with $i\ge n$). As is easy to calculate, condition (\ref{mc2}) reads
\begin{equation}\label{cB}
\frac{k_{n-1}(u_n-u_{n-1})}{\sqrt{\pi}a_{n-1}F(\xi_{n-1}^n/a_{n-1})}+b_N\ge 0.
\end{equation}
This formula remains valid also for $n=1$, in this case one have to take $\xi_{n-1}^n=\xi_0^1=+\infty$, so that
$F(\xi_{n-1}^n/a_{n-1})=F(+\infty)=1$. Under requirement (\ref{cB}) the case B) is realised so that a solution of type $n$ does not exist. More precisely, the following statements hold for $n\ge 1$.

\begin{lemma}\label{lem2}
(i) If $\displaystyle 0<-b_N\le\gamma_n=\frac{k_{n-1}(u_n-u_{n-1})}{\sqrt{\pi}a_{n-1}F(\xi_{n-1}^n/a_{n-1})}$
then a solution of type $n$ does not exist;

(ii) If $-b_N>\gamma_n$ and $-b_N-\gamma_n$ is small enough then a solution of type $n$ exists. Moreover, if
$\gamma_n<\gamma_{n+1}$ then a solution of type $n$ exists for each Neumann data $b_N$ such that $\gamma_n<-b_N\le\gamma_{n+1}$.
\end{lemma}

\begin{proof}
If $-b_N\le\gamma_n$ then relation (\ref{cB}) holds and the first statement follows.
If $n=m$ then condition (\ref{conn}) is always satisfied and a unique solution of type $m$ exists for $\gamma_m<-b_N<\gamma_{m+1}\doteq +\infty$. Hence, it remains to prove (ii) in the case $1\le n<m$.
We notice that by the strict convexity of function (\ref{E}) its minimum point $\bar\xi^n$ depends continuously on the parameter $r=-b_N$. In  particular, the last coordinate $\xi_n^n=\xi_n^n(r)$ is a continuous function of $r$. Since $\xi_n^n(\gamma_n)=0$ then for sufficiently small $-b_N-\gamma_n>0$ the left-hand side of relation (\ref{conn}) can be made so small that this relation is satisfied while condition (\ref{cB}) is violated. We conclude that there exists a unique solution (\ref{tki}), (\ref{tkn}) of type $n$ with $\xi_i=\xi_i^n(-b_N)$, $i=1,\ldots,n$. In the case $\gamma_n<\gamma_{n+1}$ we introduce the function
$\varphi(r)=k_n(u_{n+1}-u_n)/a_n-r\sqrt{\pi}F(\xi_n^n(r)/a_n)$. This function is continuous on $(0,+\infty)$ and $\varphi(r)=k_n(u_{n+1}-u_n)/a_n>0$ for $0<r\le\gamma_n$ (since $\xi_n^n(r)=0$ for such $r$). If $\varphi(r)=0$ with some $r>\gamma_n$ then for $b_N=-r$ relation (\ref{conn}) holds with equality sign, this means that the solution (\ref{tki}), (\ref{tkn}) satisfies the property $u(0)=u_{n+1}$. Therefore,
$u=u(\xi)$ is a unique solution to Stefan-Dirichlet problem (\ref{1}), (\ref{St}), (\ref{2}) with $u_D=u_{n+1}$. By the definition of numbers $\gamma_k$, $k=1,\ldots,n$, we conclude that $\displaystyle r=\frac{k_n(u_{n+1}-u_n)}{\sqrt{\pi}a_nF(\xi_n^{n+1}/a_n)}=\gamma_{n+1}$. Thus, $\gamma_{n+1}$ is the unique zero of the continuous function $\varphi(r)$. Therefore, $\varphi(r)$ keeps the positive sign on the segment $[\gamma_n,\gamma_{n+1})$
and $\varphi(r)=k_n(u_{n+1}-u_n)/a_n-r\sqrt{\pi}F(\xi_n^n(r)/a_n)\ge 0$ for $\gamma_n<r\le\gamma_{n+1}$. Substituting $r=-b_N$, we obtain that condition (\ref{conn}) is satisfied together with the condition $-b_N>\gamma_n$. Thus, a solution of type $n$ exists.
\end{proof}

\begin{remark}\label{rem3}
In addition to the statement (ii) we notice that condition (\ref{con0}), sufficient for the existence of a solution of type $0$, can be written in the form
\[\gamma_0\doteq 0<-b_N\le\gamma_1\]
Certainly, the latter condition has sense only if $u_1>u_0$, otherwise $\gamma_1=0$.
\end{remark}

Now we are going to demonstrate that actually the requirement $\gamma_n<\gamma_{n+1}$ is always satisfied.

\begin{lemma}\label{lem3}
The sequence $\gamma_n$, $n=1,\ldots,m$, strictly increases. Besides, the condition
\begin{equation}\label{tn}
\gamma_n<-b_N\le \gamma_{n+1}
\end{equation}
is necessary and sufficient for existence of a solution of type $n$.
\end{lemma}

\begin{proof}
We will prove that for all $k=0,\ldots,m-1$
\begin{equation}\label{mon}
\gamma_i<\gamma_{i+1} \quad \mbox{ for all } i\in\N, 1\le i\le k.
\end{equation}
We use the induction in $k$. If $k=0$, the set of $i$ in (\ref{mon}) is empty and there is nothing to prove.
Assuming that (\ref{mon}) holds for $k=n-1$, we have to prove it for $k=n$. For that we only need to establish that $\gamma_n<\gamma_{n+1}$. If this inequality is wrong, then either $\gamma_{n+1}=\gamma_n$ or
$\gamma_i\le\gamma_{n+1}<\gamma_{i+1}$ for some $i\in\overline{0,n-1}$ (where we agree that $\gamma_0=0$ and use the induction assumption). In the former case, by Lemma~\ref{lem2}(ii) for some $-b_N>\gamma_{n+1}=\gamma_n$ there exist solutions of problem (\ref{1}), (\ref{St}), (\ref{Neu}) of both types $n$, $n+1$, which contradicts to the uniqueness statement of Theorem~\ref{thun}.
Now we consider the case $\gamma_i\le\gamma_{n+1}<\gamma_{i+1}$. By Lemma~\ref{lem2}(ii) and Remark~\ref{rem3} we find that for some $b_N$ such that
$\gamma_{n+1}<-b_N<\gamma_{i+1}$ there exist solutions of problem (\ref{1}), (\ref{St}), (\ref{Neu}) of different types $i$ and $n+1$. But this is impossible in view of Theorem~\ref{thun}.
We conclude that $\gamma_{n+1}>\gamma_n$ as required.

To prove the second statement, we remark that by Lemma~\ref{lem2} condition (\ref{tn}) is sufficient for existence of a solution of type $n$. Conversely, if there exists a solution of problem (\ref{1}), (\ref{St}), (\ref{Neu}) having type $n$ then $-b_N>\gamma_n$ by Lemma~\ref{lem2}(i). If $-b_N>\gamma_{n+1}$ then necessarily $n<m$ and there exists such $k>n$ that
$\gamma_k<-b_N\le\gamma_{k+1}$. By Lemma~\ref{lem2} there exists another solution of problem (\ref{1}), (\ref{St}), (\ref{Neu}) that has type $k>n$. By the uniqueness this is impossible. We conclude that $-b_N\le\gamma_{n+1}$. Hence the condition (\ref{tn}) is necessary for existence of a solution of type $n$.
\end{proof}

In view of Theorem~\ref{thun} and Lemmas~\ref{lem2}, \ref{lem3}, we establish the following main results on correctness of problem (\ref{1}), (\ref{St}), (\ref{Neu}).

\begin{theorem}\label{th5} For any Neumann data $b_N<0$ the exists a unique solution of the Stefan-Neumann problem
(\ref{1}), (\ref{St}), (\ref{Neu}). The type $n$ of this solution is determined by the condition $\gamma_n<-b_N\le\gamma_{n+1}$
\end{theorem}

In the case $m=1$, $u_1>u_0$ the parameter $\displaystyle \gamma_1=\frac{k_0(u_1-u_0)}{a_0\sqrt{\pi}}$. Hence, in the case
$0<-b_N\le \gamma_1$ a unique solution $u=u(\xi)$ of (\ref{1}), (\ref{St}), (\ref{Neu}) has type $0$ while in the case $-b_N>\gamma_1$ it has type $1$.

\begin{remark}\label{rem4}
The form of equation (\ref{1}) may induce someone to conclude that a more natural Neumann condition is the following one:
\begin{equation}\label{Neu2}
a_n^2u_x(t,0)=bt^{-1/2}
\end{equation}
whenever $u$ is a solution of type $n$. But this problem is incorrect. Generally, neither existence nor uniqueness holds. In fact, consider the simple case $m=1$, $u_1>u_0$ discussed above. Since for a solution of type $n$
\[ b_N=k_nu_x(t,0)t^{1/2}=\frac{k_n}{a_n^2}b,\] we find that criteria for existence of a solution of type $0$, $1$ are, respectively,   $\displaystyle -b\le \frac{a_0^2}{k_0}\gamma_1$, $\displaystyle-b>\frac{a_1^2}{k_1}\gamma_1$. We conclude that no solution exists if
\[\frac{a_0^2}{k_0}<-\frac{b}{\gamma_1}\le\frac{a_1^2}{k_1}\]
while in the case
\[\frac{a_1^2}{k_1}<-\frac{b}{\gamma_1}\le \frac{a_0^2}{k_0}\]
there are solutions of both types $0,1$, and the uniqueness fails.
\end{remark}

\begin{remark}\label{rem5}
We established the uniqueness of a solution of the prescribed form (\ref{tki}), (\ref{tkn}). In the case $u_0=u_1$ there is another ``non-physical'' solution $u=u_*(\xi)$ of problem (\ref{1}), (\ref{1}), (\ref{Neu}) such that $u_*(\xi)>u_1$ for all $\xi>0$ and therefore the phase transition corresponding to the temperature $u_1$ is absent (it happens instantly at the initial moment $t=0$), so that the total number of phase transitions reduces by one. The solution $u_*(\xi)$ of type $n-1$ is defined by the same expressions (\ref{tki}), (\ref{tkn}), where we now take $\xi_1=\xi_0=+\infty$.
By the similar reasons as for solutions $u(\xi)$ we can prove the existence and uniqueness of the solution $u_*$.

Observe that both the functions $\tilde u=u(|x|/\sqrt{t})$, $\tilde u_*=u_*(|x|/\sqrt{t})$ are solutions of (\ref{diff1}). This seems surprising because of the uniqueness statement of Theorem~\ref{thun}. But there is no contradiction here. The matter is that the functions $\beta(\tilde u)$, $\beta(\tilde u_*)$ take different initial data, namely
\[\beta(\tilde u)(0+,x)=\beta(u_0), \quad \beta(\tilde u_*)(0+,x)=\beta(u_0+)=\beta(u_0)+d_1.\]
Therefore, $P(0+,x)=\beta(\tilde u_*)(0+,x)-\beta(\tilde u)(0+,x)=d_1>0$, and the reasoning used in the proof of Theorem~\ref{thun} is not applicable.

Similar non-physical solutions can be constructed for the Dirichlet problem as well.
\end{remark}


\begin{thebibliography}{999}
\vskip4pt
\bibitem{CJ}
H.~S. Carslaw, J.~C. Jaeger, {\it Conduction of heat in solids, Second edition}, Oxford University Press, 1959.
\bibitem{Kam}
S.~L. Kamenomostskaya (Sh. Kamin), ``On Stefan's problem,'' {\it Mat. Sb. (N.S.)}, \textbf{53(95)}, no. 4, 489--514 (1961).
\bibitem{LSU}
O.~A. Ladyzhenskaya, V.~A. Solonnikov and N.~N. Ural'tseva,
{\it Linear and Quasi-Linear Equations of Parabolic Type}, AMS, Providence, 1968.
\bibitem{Pan2}
E.~Yu. Panov, ``On the structure of weak solutions of the Riemann problem for a degenerate nonlinear diffusion equation,'' {\it Contemporary Mathematics. Fundamental Directions}, \textbf{69}, no. 4, 676--684 (2023).
\bibitem{Pan1}
E.~Yu. Panov, ``Solutions of an Ill-Posed Stefan Problem,'' {\it J. Math. Sci.}, \textbf{274}, no.~4, 534--543 (2023).
\end{thebibliography}
\end{document}